\let\oldmarginpar\marginpar
\renewcommand\marginpar[1]{\-\oldmarginpar[\raggedleft\footnotesize #1]%
{\raggedright\footnotesize #1}}
\newtheorem{theorem}{Theorem}
\newtheorem{lemma}{Lemma}
\newtheorem{proposition}{Proposition}
\newtheorem{corollary}{Corollary}
\theoremstyle{definition}
\theoremstyle{remark}
\newcommand{\D}{\mathbb{D}}
\newcommand{\abs}[1]{|#1|}
\newcommand{\Abs}[1]{\left|#1\right|}
\newcommand{\Bigabs}[1]{\Big|#1\Big|}
\newcommand{\inner}[2]{\left\langle #1|#2 \right\rangle}
\newcommand{\N}{\mathbb{N}}
\newcommand{\C}{\mathbb{C}}
\def\N{\mathbb{N}}
\def\C{\mathbb{C}}
\def\1{\mathbf{1}}
\newcommand{\dif}{\mathrm{d}}
\newcommand{\e}{\mathrm{e}}
\newcommand{\im}{\mathrm{i}}
\newcommand{\norm}[1]{\|#1\|}
\newcommand{\Per}{\operatorname{Per}}
\newcommand{\Area}{\operatorname{Area}}
\renewcommand{\Re}{\operatorname{Re}}
\begin{document}

 \title[On a sharp estimate for Hankel operators  and Putnam's inequality]{On a sharp estimate for Hankel operators \\ and Putnam's inequality}
\subjclass[2010]{Primary: 47B35, 30H20, 30H10 }
\keywords{Bergman spaces, Hankel operators, isoperimetric inequality, Putnam's inequality}

\author{Jan-Fredrik Olsen}
\address{Centre for Mathematical Sciences, Lund University, P.O. Box 118, SE-221 00 Lund, Sweden}
\email{janfreol@maths.lth.se}

\author{Maria Carmen Reguera}
\address{Departament de Ma\-te\-m\`a\-ti\-ques, Universitat Aut\`onoma de Bar\-ce\-lo\-na, Catalonia, Spain and School of Mathematics, University of Birmingham, Birmingham, United Kingdom.} 
\email{m.reguera@bham.ac.uk}

\begin{abstract} 
We obtain a sharp norm estimate for Hankel operators with anti-analytic symbol for weighted Bergman spaces. For the classical Bergman space, the estimate improves the
corresponding classical Putnam inequality for commutators of Toeplitz operators with analytic symbol by a factor of $1/2$, answering a recent conjecture by Bell, Ferguson and Lundberg.  As an application, this yields a new proof of the de St. Venant inequality, which relates the torsional rigidity 
of a domain with its area. 
\end{abstract}
  
 \maketitle

\section{Introduction}

Guided by applications to classical isoperimetric inequalities, we are concerned with finding sharp estimates for certain Hankel operators acting on the Bergman space $A^2(\D)$. More precisely, we are looking for sharp estimates in terms of the derivative of the symbol, which we assume to be anti-analytic.

 We recall that $A^2(\Omega)$ denotes the Bergman space on $\Omega$, i.e., the closed subspace of holomorphic functions in $L^2(\Omega, \dif A)$, where $\dif A = \dif x \dif y/\pi$. We let $E^2(\Omega)$ denote the Smirnov-Hardy space on $\Omega$. It is the closure of rational functions with poles outside of $\bar{\Omega}$ in the space $L^2(\partial \Omega, \abs{\dif z}/2\pi)$. When $\Omega = \D$, this is simply the classical Hardy space. 
	
These spaces are part of a scale of weighted Hilbert spaces of analytic functions on $\Omega$ which we denote by $A^2_\alpha(\Omega)$. See Section \ref{sec.putnam} for details. Here, we restrict ourselves to mentioning that the Bergman space corresponds to $\alpha=0$ while the Hardy space is obtained in the limit as $\alpha \rightarrow -1$. For convenience of notation, we   set $A_{-1}^2(\Omega) = E^2(\Omega)$ and let $L^2_\alpha$ denote the corresponding $L^2$ spaces.
	
	For each of these spaces, we denote by  $P : L^2_\alpha \rightarrow A^2_\alpha$ the orthogonal projection. For the $A^2(\D)$ this is the classical Bergman projection, while for $E^2(\D)$, this is   the Szeg\H o projection. 

The Hankel operator on $A^2_\alpha$, for some suitable symbol $\psi$, is defined as 
$$
H_{\psi}(f):= (I-P)(\psi f),  \qquad f\in A^2_\alpha,
$$
where $I$ denotes the identity map.
We will also need  the Toeplitz operator on $A^2_\alpha$, with symbol $\psi$, which is defined by
$$
T_{\psi}(f):= P(\psi f),  \qquad  f\in A^2_\alpha.
$$
We note that the Hankel operator sends $A^2_\alpha$ into its orthogonal complement in $L^2_\alpha$, 
and that $H_\psi + T_\psi  =  M_\psi$, the multiplication operator on $L^2_\alpha$ with symbol $\psi$. For
more background  on the theory of Hankel operators on Bergman spaces, we refer the reader to \cite{hedenmalm_korenblum_zhu2000}.
%
%

Our main result is as follows.
\begin{theorem}\label{t.maint}
Let $\alpha \geq -1$ and $\psi$ be a holomorphic function on $\D$ such that  $ \psi' \in A^2_\alpha(\D)$.  Then
\begin{equation}\label{e.maint}
\norm{H_{\overline{\psi}}}_{A^2_\alpha(\D)\rightarrow L^2_\alpha(\D)} \leq \frac{\norm{ \psi'  }_{A^2(\D)}}{\sqrt{2+\alpha}} .
\end{equation}
Moreover, this inequality is sharp.
\end{theorem}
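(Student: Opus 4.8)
The plan is to reduce \eqref{e.maint} to an estimate on Taylor coefficients and then to a family of one-dimensional moment inequalities. Since $H_{\overline{c}}f=(I-P)(\overline c\,f)=0$ for a constant $c$ and $f\in A^2_\alpha$, I may assume $\psi(0)=0$ and write $\psi(z)=\sum_{k\ge 1}a_kz^k$, so that $\norm{\psi'}_{A^2(\D)}^2=\sum_{k\ge1}k|a_k|^2$. Fixing $\alpha>-1$ (the Szeg\H o endpoint $\alpha=-1$ being handled by the same computation or by letting $\alpha\downarrow-1$), I record the radial moments $w_m:=\norm{z^m}_{A^2_\alpha}^2=\int_0^1 t^m\,d\nu_\alpha(t)$, where $d\nu_\alpha(t)=(\alpha+1)(1-t)^\alpha\,dt$ is a probability measure on $[0,1]$; in particular $w_1=1/(2+\alpha)$, which is exactly the constant sought. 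The first step is the identity $\norm{H_{\overline\psi}f}_{L^2_\alpha}^2=\norm{\overline\psi f}_{L^2_\alpha}^2-\norm{P(\overline\psi f)}_{A^2_\alpha}^2$ (equivalently $\norm{H_{\overline\psi}}^2=\norm{T_\psi^\ast T_\psi-T_\psi T_\psi^\ast}$, the bridge to Putnam's inequality), together with the rotation (angular-momentum) decomposition $L^2_\alpha(\D)=\bigoplus_{p\in\Z}\mathcal H_p$, in which $A^2_\alpha=\bigoplus_{p\ge0}\C z^p$ and $P$ acts on $\mathcal H_p$ as the rank-one projection onto $\C z^p$ for $p\ge0$ and as $0$ for $p<0$.

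Projecting $\overline\psi f$ onto $\mathcal H_p$ gives, for $p\ge0$, a function $u_p=z^p g_p(|z|^2)$ with $g_p(t)=\sum_{k\ge1}\overline{a_k}f_{p+k}t^k$, and I expect the clean formula $\norm{(I-P)u_p}_{L^2_\alpha}^2=w_p\,\mathrm{Var}_{\rho_p}(g_p)$, where $\rho_p$ is the probability measure proportional to $t^p\,d\nu_\alpha(t)$ and $\mathrm{Var}_{\rho_p}$ is the variance with respect to $\rho_p$; the negative-frequency sectors contribute the undiminished terms $\int_0^1 t^q|\widetilde g_q|^2\,d\nu_\alpha$ with $\widetilde g_q(t)=\sum_{j\ge0}\overline{a_{j+q}}f_j t^j$. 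Summing, $\norm{H_{\overline\psi}f}^2$ becomes an explicit quadratic form in the products $\overline{a_k}f_{p+k}$, and the target is its domination by $\tfrac{1}{2+\alpha}\big(\sum_k k|a_k|^2\big)\sum_n w_n|f_n|^2$. Structurally, the appearance of $\psi'$ is explained by the observation that $v:=H_{\overline\psi}f$ is the unique element of $(A^2_\alpha)^\perp$ solving $\partial_{\overline z}v=\overline{\psi'}f$, i.e.\ the minimal $L^2_\alpha$-norm solution of this $\overline\partial$-equation; this is the point of contact with the isoperimetric circle of ideas and suggests a sharply weighted H\"ormander-type estimate as an alternative route to \eqref{e.maint}.

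The main obstacle is the off-diagonal coupling. When $\psi$ is a single monomial $z^k$ the distinct angular sectors are orthogonal, the quadratic form is diagonal in the $f_n$, and \eqref{e.maint} collapses to the elementary moment inequalities $w_{p}\,\mathrm{Var}_{\rho_{p}}(t^k)\le \tfrac{k}{2+\alpha}\,w_{p+k}$ (and their negative-frequency counterparts $w_{k+n}\le\tfrac{k}{2+\alpha}w_n$), which I can verify directly and which are saturated only at the extremal lowest mode. For general $\psi$, however, $\mathrm{Var}_{\rho_p}(g_p)$ mixes the monomials $a_k,a_{k'}$ and thereby couples $f_{p+k}$ and $f_{p+k'}$, so $H_{\overline\psi}^\ast H_{\overline\psi}$ is no longer diagonal and one must bound a genuine infinite matrix. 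I plan to control these cross terms by writing each variance as $\tfrac{1}{2}\iint|g_p(s)-g_p(t)|^2\,d\rho_p(s)\,d\rho_p(t)$, factoring $s^k-t^k=(s-t)\sum_{i<k}s^it^{k-1-i}$, and extracting the factor $k$ (hence $\sum_k k|a_k|^2$) from the length of this sum by Cauchy--Schwarz, the residual $(s-t)^2$-weighted double moment supplying the constant $w_1=1/(2+\alpha)$. The delicate point is arranging every step so that $\psi(z)=z$ with $f\equiv1$ still produces equality; a Schur-type test against the weights $w_n$ is the fallback should the Cauchy--Schwarz bookkeeping lose the sharp constant.

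Sharpness then requires no optimisation over $\psi$: taking $\psi(z)=z$ and $f\equiv1$ gives $H_{\overline z}1=(I-P)(\overline z)=\overline z$, whence $\norm{H_{\overline z}1}_{L^2_\alpha}^2=w_1=1/(2+\alpha)$ while $\norm{\psi'}_{A^2(\D)}^2=1$, so \eqref{e.maint} holds with equality. Finally, the Szeg\H o endpoint $\alpha=-1$ (where $w_m\equiv1$ and $2+\alpha=1$) follows from the same frequency decomposition, now with $P$ the Szeg\H o projection on boundary values and only the negative-frequency sectors surviving; the pair $(\psi,f)=(z,1)$ again yields equality, matching the classical Hardy-space value. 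This reduces the theorem to the cross-term estimate, which I regard as the crux.
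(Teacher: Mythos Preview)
Your setup---Taylor expansions, the angular (Fourier) decomposition of $L^2_\alpha(\D)$, the variance interpretation of $(I-P)$ on each sector, and the sharpness witness $(\psi,f)=(z,1)$---is correct and amounts to the same opening computation the paper carries out directly in coefficients. The gap is exactly what you yourself flag as ``the crux'': the cross-term estimate is only a plan, not a proof. Your proposed route (write the sector variance as $\tfrac12\iint|g_p(s)-g_p(t)|^2\,d\rho_p\,d\rho_p$, factor $s^k-t^k$, and apply Cauchy--Schwarz over the $k$ terms to extract $\sum_k k|a_k|^2$) is reasonable in spirit, but you have not verified that it preserves the sharp constant, you do not say how the negative-frequency sectors are to be handled (there the terms are ``undiminished'', so no variance structure is available), and the Schur-test fallback would not be expected to give the exact $1/(2+\alpha)$. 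In short, the reduction to a diagonal moment inequality is not established, and that is the entire content of the theorem.

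The paper's device is different and worth comparing. After writing $\|H_{\bar\psi}f\|_\alpha^2$ as a double sum in the coefficients $(a_n)$ of $f$ and $(c_k)$ of $\psi$, it makes the substitution $b_{n+1}:=D_n^\alpha\,a_n$ with $D_n^\alpha=\|z^n\|_\alpha^2$; this renders both pieces of the quadratic form symmetric under the swap $m\leftrightarrow n$. The \emph{only} inequality used is then $2\Re(b_n\bar b_m\,c_{k+m}\bar c_{k+n})\le |b_n c_{k+m}|^2+|b_m c_{k+n}|^2$, i.e.\ arithmetic--geometric mean applied across that symmetry rather than a Cauchy--Schwarz that separates $\psi$ from $f$. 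After this single step the remaining sums collapse \emph{exactly} via the telescoping identity
\[
\sum_{\ell=0}^{v}\frac{1}{D_\ell^\alpha}\;=\;\frac{v+1}{(2+\alpha)\,D_{v+1}^\alpha},
\]
and this is where the constant $1/(2+\alpha)$ appears with no loss. So the sharp constant comes from an identity following one symmetric AM--GM, not from an optimised inequality; your variance reformulation does not expose the $m\leftrightarrow n$ symmetry that makes this work, and the Cauchy--Schwarz you propose runs in a different direction (decoupling $\psi'$ from $f$), which is why the bookkeeping you anticipate being ``delicate'' is genuinely at risk of losing the constant.
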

The estimate in \eqref{e.maint} can be used to obtain a Putnam type inequality on spaces defined on simply connected domains $\Omega$. We recall that Putnam's inequality \cite{putnam1970} asserts the following: let $H$ be a Hilbert space and let $T:H\mapsto H$ be a bounded linear operator whose spectrum is denoted by $\sigma(T)$. If $T$ is hyponormal, that is, $\langle T^\ast T- TT^\ast f, f \rangle\geq 0$ for all $f\in H$, then 
\begin{equation}\label{e.putor}
\norm{T^\ast T- TT^\ast} \leq \frac{\text{Area}(\sigma(T))}{\pi}.
\end{equation}

As an application of Putnam's inequality to the Smirnov-Hardy space $E^2(\Omega)$, where we suppose that $\Omega$ is a reasonable domain, Khavinson \cite{khavinson1985} deduced the classical isoperimetric inequality. Indeed,     he obtained the   lower bound  
$$
\norm{T_{z}^\ast T_{z}- T_{z}T_{z}^\ast}_{E^2(\Omega)\rightarrow   E^2(\Omega)}\geq \frac{4\text{Area}(\Omega)^2}{\text{Per}(\Omega)^2},
$$
where $\text{Per}(\Omega)$ denotes  the perimeter of the region $\Omega$, and noted that when combined with Putnam's inequality, this lower bound immediately yields the isoperimetric inequality of the region $\Omega$. 

Bell, Ferguson and Lundberg \cite{bell_ferguson_lundberg2012} considered the problem of obtaining a lower bound  for the commutators of Toeplitz operators on the Bergman space $A^2(\Omega)$. In particular, they obtained the   inequality
\begin{equation} \label{belletal}
\norm{T_{z}^\ast T_{z}- T_{z}T_{z}^\ast}_{A^2(\Omega)\rightarrow A^2(\Omega) }\geq  \frac{\rho_\Omega}{\text{Area}(\Omega)}.
\end{equation}
Here, $\rho_\Omega$ is a constant from mechanics known as the torsional rigidity. It is geometric in the sense that it only depends on the shape of the domain $\Omega$, and can be said to measure the resistance an object with cross-section $\Omega$ has to rotation (see Section \ref{sec.rigidity} for a precise definition).

As was observed in \cite{bell_ferguson_lundberg2012}, when combined with Putnam's inequality, this yields 
$$
\rho(\Omega)\leq \frac{\text{Area}(\Omega)^{2}}{\pi}.
$$
This is close to the classical de St. Venant inequality
$$
\rho(\Omega)\leq \frac{{\Area}(\Omega)^{2}}{2\pi}.
$$
Consequently, Bell, Ferguson and Lundberg   conjectured that in this setting it should be possible to improve Putnam's inequality by a factor of $1/2$. This conjecture is answered in the positive by Theorem \ref{t.maint}.  To be precise, we obtain the following corollary.
\begin{corollary}\label{c.putnam}
	Let $\alpha \geq -1$, $\Omega\subset\C$ be a simply connected domain and $\psi$ an analytic function. Then $$\norm{T_\psi^\ast T_\psi  - T_\psi T_\psi^\ast}_{A^2_\alpha(\Omega) \rightarrow A^2_\alpha(\Omega)} \leq \frac{\norm{\psi'}_{A^2(\Omega)}^2}{2+\alpha}.$$
\end{corollary}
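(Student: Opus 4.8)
The plan is to reduce the corollary to Theorem~\ref{t.maint} in two moves: an algebraic identity expressing the self-commutator as a single Hankel operator, followed by a conformal change of variables that transports an arbitrary simply connected $\Omega$ back to the disk $\D$.

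\textbf{Step 1 (reduction to a Hankel operator on the disk).} First I would record the standard operator identity on $A^2_\alpha(\D)$. Since $\psi$ is analytic, multiplication by $\psi$ preserves $A^2_\alpha$, so $T_\psi = M_\psi$ on the space and $T_\psi^\ast = T_{\bar\psi}$. Writing $H_{\bar\psi} = (I-P)M_{\bar\psi}$ and expanding $\langle (T_\psi^\ast T_\psi - T_\psi T_\psi^\ast) f, g\rangle$ for $f,g \in A^2_\alpha$, the two copies of $\langle |\psi|^2 f, g\rangle$ cancel and one is left with $\langle (I-P)(\bar\psi f), (I-P)(\bar\psi g)\rangle = \langle H_{\bar\psi} f, H_{\bar\psi} g\rangle$. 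Hence
\[
T_\psi^\ast T_\psi - T_\psi T_\psi^\ast = H_{\bar\psi}^\ast H_{\bar\psi},
\]
a positive operator whose norm is exactly $\norm{H_{\bar\psi}}^2$. On the disk, Theorem~\ref{t.maint} then yields the bound $\norm{\psi'}_{A^2(\D)}^2/(2+\alpha)$ immediately; note this computation only uses the projection $P$ and the analyticity of $\psi$, so it is valid for every $\alpha \in [-1,\infty)$.

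\textbf{Step 2 (conformal transfer to $\Omega$).} To handle a general simply connected $\Omega$, let $\phi:\D\to\Omega$ be a Riemann map and set $U f = (f\circ\phi)\,\phi'$. The Jacobian identity $\dif A(w) = |\phi'|^2\,\dif A(z)$ shows that $U$ is unitary from $A^2(\Omega)$ onto $A^2(\D)$, and with the corresponding weight from $A^2_\alpha(\Omega)$ onto $A^2_\alpha(\D)$, which is how the weighted spaces on $\Omega$ are set up in Section~\ref{sec.putnam}. A direct computation gives $U T_\psi U^{-1} = T_{\psi\circ\phi}$, so $U$ intertwines the self-commutator on $A^2_\alpha(\Omega)$ with symbol $\psi$ and the self-commutator on $A^2_\alpha(\D)$ with symbol $\psi\circ\phi$; being unitary, it preserves the operator norm.

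\textbf{Step 3 (assembling).} Combining the two steps and applying Theorem~\ref{t.maint} with symbol $\psi\circ\phi$, I would bound the norm by $\norm{(\psi\circ\phi)'}_{A^2(\D)}^2/(2+\alpha)$. The final ingredient is conformal invariance of the right-hand side: since $(\psi\circ\phi)' = (\psi'\circ\phi)\,\phi'$, the same Jacobian change of variables gives $\norm{(\psi\circ\phi)'}_{A^2(\D)} = \norm{\psi'}_{A^2(\Omega)}$ (the \emph{unweighted} Bergman norm of the derivative being the conformally natural quantity), and the corollary follows. The main point to be careful about is Step~2: one must confirm that for general $\alpha$ the space $A^2_\alpha(\Omega)$ is precisely the conformal pullback of $A^2_\alpha(\D)$, so that $U$ is genuinely unitary and the intertwining holds across the whole range $\alpha\in[-1,\infty)$ — including the Hardy–Smirnov endpoint $\alpha=-1$ — and not merely in the intrinsically defined Bergman case $\alpha=0$.
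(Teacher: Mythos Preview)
Your proposal is correct and follows essentially the same route as the paper: the paper also computes $\norm{T_\psi^\ast T_\psi - T_\psi T_\psi^\ast}_{A^2_\alpha(\Omega)} = \norm{H_{\bar\psi}}_{A^2_\alpha(\Omega)\to L^2_\alpha(\Omega)}^2$ (via the quadratic form rather than the operator identity $H_{\bar\psi}^\ast H_{\bar\psi}$, but this is the same content), transfers to the disk by the conformal change of variables (your Step~2 is their Lemma~\ref{lemma}, phrased at the level of Hankel norms rather than as a unitary intertwining of Toeplitz operators), applies Theorem~\ref{t.maint}, and finishes with $\norm{(\psi\circ\phi^{-1})'}_{A^2(\D)} = \norm{\psi'}_{A^2(\Omega)}$. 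Your caution about Step~2 is well placed but unnecessary here: the spaces $A^2_\alpha(\Omega)$ are \emph{defined} in Section~\ref{sec.putnam} precisely as conformal pullbacks of $A^2_\alpha(\D)$ (with the Smirnov--Hardy space at $\alpha=-1$), so the unitary $Uf = (f\circ\phi)(\phi')^{1+\alpha/2}$ exists by construction.
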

We explain how to deduce this corollary in Section \ref{sec.putnam}. 

Hence, by choosing $\psi=z$, we immediately get the following improvement of Putnam's inequality for the shift operators $T=T_z$ on $\Omega$. Indeed, in 
this case
\begin{equation*}
	 \norm{T^\ast T - TT^\ast }_{A^2(\Omega) \rightarrow A^2(\Omega)}  \leq \frac{\norm{1}_{A^2(\Omega)}^2}{2} = \frac{\text{Area}(\Omega)}{2\pi}.
\end{equation*}
(Recall that   the planar measure $\dif A$ is normalized on $\D$, whence the factor $1/\pi$.)
By the arguments of Bell, Ferguson and Lundberg, a new proof of the de St. Venant inequality now follows.

We also record the following consequence of Theorem \ref{t.maint}. It is simply the observation that for analytic functions $h$ on  $\Omega$, then $u = H_{\bar{z}}h$ satisfies $\bar{\partial} u = h$. Here $\bar{\partial} = (\partial_x + \im \partial_y)/2$ is the classical d-bar operator.
\begin{corollary}
	Let $h$ be an  analytic function on a simply connected domain $\Omega$. Then there exists a function $u \in L^2(\Omega,\dif A_\alpha)$ such that $\bar{\partial} u = h$ with
	\begin{equation*}
	 	\norm{u}_{L^2(\Omega, \dif A_\alpha)}^2 \leq \frac{\Area(\Omega)}{(2+\alpha)\pi } \norm{h}_{A^2_\alpha(\Omega)}^2.
	\end{equation*}
\end{corollary}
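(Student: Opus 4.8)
The remark preceding the statement essentially dictates the strategy: I would \emph{define} the solution to be $u = H_{\bar z} h$ and then verify the two required properties. We may assume $h \in A^2_\alpha(\Omega)$, since otherwise the right-hand side is infinite and there is nothing to prove. By the definition of the Hankel operator, $u = (I-P)(\bar z h) = \bar z h - P(\bar z h)$. The second term $P(\bar z h)$ belongs to $A^2_\alpha(\Omega)$ and is therefore holomorphic, so $\bar\partial P(\bar z h) = 0$. Since $h$ is holomorphic, $\bar\partial h = 0$, and the Leibniz rule together with $\bar\partial \bar z = 1$ gives $\bar\partial(\bar z h) = (\bar\partial \bar z) h + \bar z\, \bar\partial h = h$. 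Hence $\bar\partial u = h$, which establishes the first claim.

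For the norm estimate, I would apply Theorem \ref{t.maint} on the domain $\Omega$---in the form obtained by transporting the disc estimate through a conformal map, exactly as in the deduction of Corollary \ref{c.putnam} in Section \ref{sec.putnam}---with the choice of symbol $\psi = z$. Since then $\psi' = 1 \in A^2_\alpha$, this yields the operator-norm bound
\begin{equation*}
    \norm{H_{\bar z}}_{A^2_\alpha(\Omega) \rightarrow L^2_\alpha(\Omega)} \leq \frac{\norm{1}_{A^2(\Omega)}}{\sqrt{2+\alpha}}.
\end{equation*}
Applying this to $h$, squaring, and using that with the normalized measure $\dif A = \dif x\,\dif y/\pi$ one has $\norm{1}_{A^2(\Omega)}^2 = \int_\Omega \dif A = \Area(\Omega)/\pi$, I obtain
\begin{equation*}
    \norm{u}_{L^2(\Omega,\dif A_\alpha)}^2 \leq \frac{\norm{1}_{A^2(\Omega)}^2}{2+\alpha}\,\norm{h}_{A^2_\alpha(\Omega)}^2 = \frac{\Area(\Omega)}{(2+\alpha)\pi}\,\norm{h}_{A^2_\alpha(\Omega)}^2,
\end{equation*}
which is the asserted inequality.

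The only non-elementary ingredient is the passage of the Hankel-norm estimate from $\D$ to the general simply connected domain $\Omega$, which requires interpreting $H_{\bar z}$ and the weighted spaces $A^2_\alpha(\Omega)$, $L^2_\alpha(\Omega)$ under a conformal change of variables. I expect this to be the main obstacle in a self-contained argument, but since it is precisely the machinery already set up for Corollary \ref{c.putnam}, it can be invoked directly rather than redeveloped. The verification that $\bar\partial u = h$ and the computation of $\norm{1}_{A^2(\Omega)}$ are both entirely routine.
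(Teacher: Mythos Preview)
Your proposal is correct and follows exactly the approach indicated in the paper: the paper's entire proof is the one-line remark that $u = H_{\bar z} h$ satisfies $\bar\partial u = h$, with the norm bound then read off from the Hankel estimate (via the transport to $\Omega$ established in Lemma~\ref{lemma} and the proof of Corollary~\ref{c.putnam}). Your write-up simply unpacks that remark in full detail, and every step is sound.
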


Finally, a natural question is whether one can interpolate the lower bounds of Khavinson, one the one hand, and Bell, Ferguson and Lundberg on the other. Unfortunately we are not able to do this, but we make several remarks on the question of a lower bound in Section \ref{sec.rigidity}. In particular, we obtain an inequality
\begin{equation*}
		\norm{[T_z^\ast,T_z ]}_{A^2_\alpha(\Omega)} \geq \frac{\rho_{\Omega,\alpha}}{\int_\Omega w_\alpha \, \dif x \dif y},
\end{equation*}
where $\rho_{\Omega,\alpha}$ is a quantity analogous to the torsional rigidity associated with the spaces $L^2_\alpha(\Omega)$. We observe, by an example, that for the Smirnov-Hardy spaces $E^2(\Omega)$, this lower bound is in general larger than the one of Khavinson.

The structure of the paper is as follows. In Section \ref{sec.putnam}, the relation between the norm of the commutator and the norm of a Hankel operator on the disc $\D$ is shown, as well as the proof of Corollary \ref{c.putnam} using Theorem \ref{t.maint}. Section \ref{sec.maint} is dedicated to the proof of the Theorem \ref{t.maint} in the unweighted Bergman case, $\alpha=0$, while Section \ref{sec.weightedt} explains how to extend the proof to the general case. Finally, Section \ref{sec.rigidity} is dedicated to the proof of the lower bounds for the commutators of Toeplitz operators with symbol $\bar{z}$ in the weighted Bergman spaces in terms of torsional rigidities.


\section{The relation to Hankel operators  and a proof of Corollary \ref{c.putnam}} \label{sec.putnam}
%


We begin by explicitly defining the spaces $A_\alpha^2(\Omega)$ mentioned above. Let $\Omega$ be a simply connected domain in $\C$ and suppose that $\phi : \Omega \rightarrow \D$ is the corresponding univalent and analytic Riemann mapping. For $\alpha > -1$, we define $A^2_\alpha(\Omega)$ to be the closure of holomorphic functions on $\Omega$ in the norm given by
\begin{equation*}
	\norm{f}_{\alpha}^2  = \int_\Omega \abs{f(z)}^2 \dif A_\alpha(z),
\end{equation*}
where $$\dif A_\alpha(z) = (1+\alpha) \frac{(1- \abs{\phi(z)}^2)^\alpha}{\abs{\phi'(z)}^\alpha} \frac{\dif x \dif y}{\pi}.$$ 

For $\alpha=-1$, we set $A^2_\alpha(\Omega)$ to be the Smirnov-Hardy space $E^2(\Omega)$, defined to be the closure of the rational functions with poles outside of $\bar{\Omega}$ in the norm given by
\begin{equation*}
	\norm{f}_{E^2(\Omega)}^2  = \int_{\partial \Omega} \abs{f(z)}^2 \frac{ \abs{\dif z} }{2\pi}.
\end{equation*}
A standard reference for Smirnov-Hardy spaces is \cite[Chapter 10]{duren1970}. 

We remark that with these definitions, it follows that
\begin{equation*}
	\lim_{\alpha \rightarrow -1^+} \norm{f}_\alpha^2  =   \norm{f}^2_{E^2(\Omega)}.
\end{equation*}
Moreover, recall that we set $L^2_\alpha(\Omega) = L^2(\Omega, \dif A_\alpha)$ for $\alpha>-1$ and $L^2_\alpha(\Omega) = L^2(\partial \Omega, \abs{\dif z}/2\pi)$ for $\alpha=-1$.

For $\alpha>-1$ the orthogonal projection from $L^2_\alpha$ to $A^2_\alpha$ is given by
\begin{equation}
\label{e.rep}
	Pg(z)= \int_{\Omega} g(w) \frac{\overline{\phi'(w)^{1+\alpha/2}}\phi'(z)^{1+\alpha/2}}{(1-\phi(z)\overline{\phi(w)})^{2+\alpha}} \dif A_\alpha(w),
\end{equation}
while for $\alpha=-1$ it is given by
\begin{equation*}
	Pg(z) = \int_{\partial \Omega} g(w) \frac{\overline{\phi'(w)^{1/2}}  \phi'(z)^{1/2}}{1-\phi(z)\overline{\phi(w)}} \frac{\abs{\dif w}}{2\pi}.
\end{equation*}

With this, one can now write down explicit expressions for both the Toeplitz operators $T_\psi : f \longmapsto P(\psi f)$ and the Hankel operators $H_\psi : f \longmapsto (I-P)(\psi f)$ on these spaces.

The following lemma is useful.
\begin{lemma} \label{lemma}
	Let $\alpha \geq -1$ and $\Omega$ a simply connected domain in $\C$. If $\phi : \Omega \rightarrow \D$ is a Riemann mapping, then
	\begin{equation*}
		\norm{H_{\psi}}_{A^2_\alpha(\Omega)} =  \norm{H_{ {\psi \circ \phi^{-1}}}}_{A^2_\alpha(\D)},
	\end{equation*}
	where the   operators $H_\psi$ and $H_{\psi \circ \phi^{-1}}$ are Hankel   for the spaces  $A^2_\alpha(\Omega)$ and $A^2_\alpha(\D)$, respectively.
\end{lemma}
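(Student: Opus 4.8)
The plan is to show that the Riemann mapping $\phi:\Omega\to\D$ induces a unitary equivalence between the Hankel operators on the two spaces, so that their norms coincide. The natural candidate for the intertwining unitary is the weighted composition operator $U_\alpha : A^2_\alpha(\D)\to A^2_\alpha(\Omega)$ defined by $U_\alpha f = (\phi')^{1+\alpha/2}\,(f\circ\phi)$. First I would verify that $U_\alpha$ is indeed an isometry (hence unitary, since $\phi$ is a biholomorphism) from $A^2_\alpha(\D)$ onto $A^2_\alpha(\Omega)$: this is a change-of-variables computation using that the weight $\dif A_\alpha(z)$ on $\Omega$ was defined precisely as the pullback $(1+\alpha)(1-\abs{\phi(z)}^2)^\alpha\abs{\phi'(z)}^{-\alpha}\,\dif x\,\dif y/\pi$, so that the Jacobian factor $\abs{\phi'}^2$ together with the weight combines to reproduce the standard weight $(1+\alpha)(1-\abs{w}^2)^\alpha\,\dif A(w)$ on $\D$; the extra analytic factor $(\phi')^{1+\alpha/2}$ contributes exactly $\abs{\phi'}^{2+\alpha}$ in modulus, which cancels the $\abs{\phi'}^{-\alpha}$ in the weight and the $\abs{\phi'}^2$ from the Jacobian. (The case $\alpha=-1$ is handled analogously on the boundary, using $\abs{\dif z}=\abs{\phi'}\abs{\dif\phi}$ on $\partial\Omega$.)

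Next I would check that $U_\alpha$ intertwines the orthogonal projections, i.e. $P_\Omega\, U_\alpha = U_\alpha\, P_\D$, equivalently $U_\alpha P_\D U_\alpha^{-1}=P_\Omega$. Since $U_\alpha$ is unitary, it suffices to observe that it carries $A^2_\alpha(\D)$ onto $A^2_\alpha(\Omega)$ (already established) and its orthogonal complement in $L^2_\alpha(\D)$ onto that in $L^2_\alpha(\Omega)$; the latter follows because $U_\alpha$, extended to the full $L^2_\alpha$ spaces by the same formula, is unitary on the larger spaces by the identical change of variables, and a unitary carries a closed subspace onto its image and the orthogonal complement onto the complement of the image. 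One can also verify the intertwining directly by recognizing that the explicit reproducing kernel in \eqref{e.rep} is exactly the push-forward under $U_\alpha$ of the standard Bergman kernel $(1-\phi(z)\overline{\phi(w)})^{-(2+\alpha)}$ on $\D$, with the factors $\overline{\phi'(w)^{1+\alpha/2}}\,\phi'(z)^{1+\alpha/2}$ being precisely the kernel of the conjugation by $U_\alpha$.

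The remaining step is the symbol bookkeeping: since multiplication by the symbol commutes with composition in the sense that $M_{\psi}(U_\alpha^{-1}g) = U_\alpha^{-1}M_{\psi\circ\phi}g$ up to the analytic weight—more precisely, $U_\alpha^{-1}M_\psi U_\alpha = M_{\psi\circ\phi^{-1}}$ because the weight factor $(\phi')^{1+\alpha/2}$ is unaffected by multiplication by the (anti-)analytic function $\psi$ and only the composition of $\psi$ with the map survives—I get $U_\alpha^{-1}H_\psi U_\alpha = (I-P_\D)U_\alpha^{-1}M_\psi U_\alpha = (I-P_\D)M_{\psi\circ\phi^{-1}} = H_{\psi\circ\phi^{-1}}$ on $A^2_\alpha(\D)$. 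A unitary conjugation preserves operator norm, which yields the claimed equality $\norm{H_\psi}_{A^2_\alpha(\Omega)}=\norm{H_{\psi\circ\phi^{-1}}}_{A^2_\alpha(\D)}$.

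I expect the main obstacle to be the careful treatment of the multivalued analytic branch $(\phi')^{1+\alpha/2}$ and its conjugate. On a general simply connected $\Omega$ the branch of $\phi'$ raised to a non-integer power need not be single-valued, so one must either argue that the ambient $L^2_\alpha$ norms and the Hankel operator depend only on the modulus $\abs{\phi'}^{1+\alpha/2}$ (so that the phase ambiguity is immaterial), or restrict to domains where a single-valued branch exists and then pass to the general case by the density/approximation inherent in the definition of $A^2_\alpha(\Omega)$. A secondary technical point is the limiting case $\alpha=-1$, where the area integrals are replaced by boundary integrals and one should confirm that the same composition operator, now acting on $E^2(\Omega)$, remains a surjective isometry intertwining the Szegő projections; this requires invoking the boundary behavior of the Riemann map, for which the conformal-invariance of the Smirnov class and the identity $\abs{\dif z}=\abs{\phi'}\,\abs{\dif\phi}$ on $\partial\Omega$ are the essential ingredients.
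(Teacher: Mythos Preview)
Your proposal is correct and follows essentially the same approach as the paper: both arguments show that the weighted composition map $f\mapsto(\phi')^{1+\alpha/2}(f\circ\phi)$ is a unitary between the $L^2_\alpha$ spaces that conjugates $H_{\psi\circ\phi^{-1}}$ on $\D$ to $H_\psi$ on $\Omega$; the paper simply writes this out as an explicit change of variables in the kernel integral (for $\alpha=0$), whereas you phrase the same computation as ``unitary intertwines multiplication and projection.'' Your anticipated obstacle about the branch of $(\phi')^{1+\alpha/2}$ is not an issue, since $\phi'$ is a non-vanishing holomorphic function on the simply connected domain $\Omega$ and therefore has a single-valued holomorphic logarithm, making all complex powers globally well defined.
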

\begin{proof}
We verify the formula for $\alpha=0$, the other cases follow an identical argument. By the above, we have the formula
\begin{align*}
	H_{\bar{\psi}}f(z) 
	= \int_{\Omega}   \frac{\phi'(z) \overline{\phi'(w)}}{\big( 1    -   \phi(z) \overline{\phi(w)}\big)^2} \big(\overline{\psi(z)}-  \overline{\psi(w})\big) f(w) \dif A (w).
\end{align*}
We now make the   change of variables  $\zeta = \phi(z)$ and $\omega = \phi(w)$. Passing the integration from $\Omega$ to $\D$, we get the 
Jacobian $ \abs{(\phi^{-1})'(\omega)}^2$. Thus, the above expression is equal to
\begin{multline*}
	\int_{\D}   \frac{\phi' ( \phi^{-1}(\zeta) ) \overline{\phi'(\phi^{-1}(\omega))}}{\big( 1    -  \zeta \overline{\omega}\big)^2} \big(\overline{\psi\circ \phi^{-1}(\zeta)}-  \overline{\psi \circ \phi^{-1}(\omega)}\big) \big(f\circ \phi^{-1}(\omega)\big)  \abs{(\phi^{-1})'(\omega)}^2\dif A (\omega) \\
	=
		\int_{\D}   \frac{ \overline{\psi\circ \phi^{-1}(\zeta)}-  \overline{\psi \circ \phi^{-1}(\omega)}}{\big( 1    -  z \overline{\omega}\big)^2}  \;
		\frac{(\phi^{-1})'(\omega)}{(\phi^{-1})'(\zeta) }
   \; \big( f\circ \phi^{-1}(\omega)  \big) \dif A (\omega).
\end{multline*}
Changing notations, this yields the formula
\begin{align*}
	(\phi^{-1})'(\zeta) \big((H_{\overline{\psi}} f) \circ \phi^{-1}\big)(\zeta) 
   =
	 H_{\overline{\psi\circ\phi^{-1}}}\; g(\zeta),
\end{align*}
where $g(\zeta) = (\phi^{-1})'(\omega)   \; (f\circ \phi^{-1})(\omega)$ and $H_{\overline{\psi\circ\phi^{-1}}}$ is a   Hankel operator on the disk. Taking norms, it is clear that
\begin{equation*}
	\norm{H_{\overline{\psi}}}_{A^2(\Omega)\rightarrow L^2(\Omega)} 
	= \norm{H_{\overline{\psi\circ\phi^{-1}}}}_{A^2(\D)\rightarrow L^2(\D)}.
\end{equation*}
\end{proof}

\subsection*{Proof of Corollary \ref{c.putnam}} Let  $T = T_\psi$, where $\psi$ is an   analytic map on $\Omega$. The strategy is first to relate the commutator to a Hankel operator on $A^2_\alpha(\Omega)$ and then  to  pass  to the unit disk using Lemma \ref{lemma}. There, we  apply  Theorem \ref{t.maint} which yields the result. Indeed, by a straight-forward computation, we get
\begin{align*}
\|T^{*}T-TT^{*}\|_{A_\alpha^2(\Omega)\rightarrow A_\alpha^2(\Omega)} & =
\sup_{\substack{h\in A_\alpha^{2}(\Omega)\\ \|h\|_{A_\alpha^{2}}=1}}\langle (T^{*}T-TT^{*})h, h\rangle\\
& =\sup_{\substack{h\in A_\alpha^{2}(\Omega)\\  \|h\|_{A_\alpha^{2}}=1}} \Big(  \|Th\|_{A_\alpha^{2}(\Omega)}^{2}-  \|T^{*}h\|_{A_\alpha^{2}(\Omega)}^{2} \Big) \\
&= \sup_{\substack{h\in A_\alpha^{2}(\Omega)\\ \|h\|_{A_\alpha^{2}}=1}}  \Big( \|\psi h\|_{A_\alpha^{2}(\Omega)}^{2}- \|P(\overline{\psi}h)\|_{A_\alpha^{2}(\Omega)}^{2} \Big) \\
&= \sup_{\substack{h\in A_\alpha^{2}(\Omega)\\ \|h\|_{A_\alpha^{2}}=1}}  \Big( \|\overline{\psi}h\|_{L_\alpha^{2}(\Omega)}^{2}- \|P(\overline{\psi}h)\|_{A_\alpha^{2}(\Omega)}^{2} \Big) \\ 
&=   \norm{H_{\overline{\psi}}}_{A_\alpha^2(\Omega) \rightarrow L_\alpha^2(\Omega)}^2.
\end{align*}
Applying   Lemma \ref{lemma} followed by Theorem \ref{t.maint}, we get
\begin{equation*}
	 \norm{H_{\overline{\psi\circ\phi^{-1}}}}_{A_\alpha^2(\D)\rightarrow L_\alpha^2(\D)}^2 \leq \frac{\norm{(\psi \circ \phi^{-1})'}_{A^2(\D)}^2}{2 + \alpha} =  \frac{\norm{\psi' }_{A^2(\Omega)}^2}{2 + \alpha}.
\end{equation*}
This proves Corollary \ref{c.putnam}. \qed

\section{Proof of Theorem \ref{t.maint} in the case $\alpha=0$} \label{sec.maint}

In this section we prove our main theorem for $\alpha=0$. This gives essentially the idea of the proof in the general case, without the notation getting too much in the way. In the next section, we explain in detail how to extend this proof to the general case.

 The strategy of the proof is as follows. For $f \in A^2(\D)$, write $f(z) = \sum_{n \geq 0} a_n z^n$, and set $\psi(z) = \sum_{k\geq 1} c_k z^k$ (note that we can assume $\psi(0)=0$ without loss of generality). We then   express the function $H_{\overline{\psi}}f$   in terms  of these Taylor coefficients, and   obtain the desired  norm estimate by working directly with the coefficients. Essentially, the only  inequality we use is $ab \leq (1/2)(a^2 + b^2)$.

As a first step, we observe that 
\begin{align*}
	P(\bar{z}^k z^n) = \int_\D \frac{\bar{w}^k w^n }{(1-\bar{w}z)^{2}} \dif A(w) &= \sum_{\ell\geq0} (\ell+1)z^\ell \int_\D \bar{w}^{k+\ell} w^n \dif A(w)  \\
	&=\left\{ \begin{aligned}  \frac{n-k+1}{n+1} z^{n-k} & \quad \text{ if } 0 \leq k \leq n  \\  0  & \quad \text{ if } n < k \end{aligned} \right..
\end{align*}
From the previous computation, for $n\geq 1$, it follows that
\begin{align*}
	P(\overline{\psi} z^n) = \sum_{k\geq 1} \bar{c}_k P  (\overline{z}^k z^n)  &= \sum_{k=1}^n \frac{n-k+1}{n+1} \bar{c}_k  z^{n-k} \\
	&= \sum_{k=0}^{n-1}  \frac{k+1}{n+1}   \bar{c}_{n-k} z^k,
\end{align*}
whence we get the   expression
\begin{align*}
	 H(\overline{\psi}f)(z) &=  \overline{\psi}(z) f(z) - P(\overline{\psi}f)(z)\\
	 &= \sum_{\ell \geq 1} \sum_{n \geq 0} \bar{c}_\ell a_n \bar{z}^\ell z^n-  \sum_{n\geq 1} \sum_{k=0}^{n-1} \frac{k+1}{n+1}  a_n \bar{c}_{n-k} z^k.
\end{align*}
Our next objective is to integrate with respect to   $\int_0^{2\pi} \dif \theta/\pi$. Before we do this, we rewrite this expression in order
to more easily use the orthogonal structure. 
First, we rewrite the above expression as
\begin{equation} \label{some equation}
	  \sum_{l\geq 1} a_0 \bar{c}_l \bar{z}^l  + \sum_{n\geq 1} a_n \Big( \underbrace{  \sum_{l\geq 1} \bar{c}_l \bar{z}^l z^n - \sum_{0 \leq k \leq n-1}\frac{k+1}{n+1} \bar{c}_{n-k} z^k }_{(*)}\Big).
\end{equation}
Now,
\begin{align*}
	(*) &=  \sum_{1 \leq \ell \leq n} \bar{c}_\ell \abs{z}^{2l} z^{n-\ell}  + \sum_{\ell \geq n+1} \bar{c}_\ell \abs{z}^{2n} \bar{z}^{\ell-n} - \sum_{0 \leq k \leq n-1} \frac{k+1}{n+1} \bar{c}_{n-k} z^k \\
	&= \sum_{1 \leq k \leq n} \bar{c}_{n-k} \abs{z}^{2(n-k)} z^{k}  + \sum_{k \geq 1} \bar{c}_{k+n} \abs{z}^{2n} \bar{z}^{k} - \sum_{0 \leq k \leq n-1}  \frac{k+1}{n+1} \bar{c}_{n-k} z^k.
\end{align*}
Plugging this back in to equation \eqref{some equation}, we get
\begin{multline*}
	 \sum_{k\geq 1} \bar{z}^k \Big( a_0 \bar{c}_k   +  \sum_{n\geq 1 } a_n \bar{c}_{k+n} \abs{z}^{2n} \Big) 
	 + \sum_{n\geq 1} a_n \sum_{0 \leq k \leq n-1}\bar{c}_{n-k} z^k \Big( \abs{z}^{2(n-k)} - \frac{k+1}{n+1}  \Big) \\ = 
	 \sum_{k\geq 1} \bar{z}^k \Big( \sum_{n\geq 0 } a_n \bar{c}_{k+n} \abs{z}^{2n} \Big) 
	 +  \sum_{k \geq 0}  z^k \sum_{n\geq k+1} a_n\bar{c}_{n-k} \Big( \abs{z}^{2(n-k)} - \frac{k+1}{n+1}  \Big).
\end{multline*}
Taking the modulus squared of this at $z=r\e^{\im \theta}$ and integrating with respect to $\int_0^{2\pi} \dif \theta/\pi$ yields
\begin{equation*}
	\underbrace{2 \sum_{k\geq 1} r^{2k}  \Bigabs{ \sum_{n\geq 0 } a_n \bar{c}_{k+n} r^{2n} }^2}_{(I)}
	 + \underbrace{ 2\sum_{k \geq 0}  r^{2k} \Bigabs{ \sum_{n\geq k+1} a_n\bar{c}_{n-k} \Big( r^{2(n-k)} - \frac{k+1}{n+1}  \Big)}^2}_{(II)}.	 
\end{equation*}
Before integrating with respect to the radial part of  the measure $\dif A = r \dif r \dif \theta/\pi$, we multiply out  and rewrite these expressions slightly:
\begin{align*}
	(I) =  2  \sum_{\underset{k\geq 1}{n,m\geq 0} } a_n \bar{a}_m c_{k+m} \bar{c}_{k+n} r^{2n+2m+2k},
\end{align*}
and
\begin{multline*}
	(II)  = 2\sum_{k \geq 0}   \sum_{n,m\geq k+1} a_n \bar{a}_m c_{m-k}\bar{c}_{n-k} \Big( r^{2n + 2m -2k}  - r^{2n} \frac{k+1 }{m+1} \\ - r^{2m} \frac{k+1 }{n+1} +  r^{2k} \frac{(k+1)^2 }{(n+1)(m+1)}  \Big).
\end{multline*}
Integration against $\int_0^1 r \dif r$ now yields
\begin{align*}
	(I) &=   \sum_{\underset{k\geq 1}{n,m\geq 0} } \frac{a_n \bar{a}_m c_{k+m} \bar{c}_{k+n} }{n+m+k+1}, \\
	(II)  &= \sum_{k \geq 0}   \sum_{n,m\geq k+1} \frac{a_n \bar{a}_m c_{m-k}\bar{c}_{n-k}  (m-k)(n-k)}{(n+1)(m+1)(n+m-k+1)}.
\end{align*}
At this point, we make a change of coefficients by setting $a_n = b_{n+1} (n+1)$. With this notation,
\begin{align*}
	(I) &=   \sum_{\underset{k\geq 1}{n,m\geq 0} }  b_{n+1}    \bar{b}_{m+1}     c_{k+m}   \bar{c}_{k+n}      \frac{(n+1)(m+1)}{n+m+k+1}, \\
	(II)  &= \sum_{k \geq 0}   \sum_{n,m\geq k+1} b_{n+1} \bar{b}_{m+1} c_{m-k}\bar{c}_{n-k}  \frac{ (m-k)(n-k)}{ n+m-k+1}.
\end{align*}
Adjusting the indices of summation slightly,  
\begin{align*}
	(I) &=   \sum_{\underset{k\geq 0}{n,m\geq 1} }  b_{n}    \bar{b}_{m}     c_{k+m}   \bar{c}_{k+n}      \frac{nm}{n+m+k}, \\
	(II)  &= \sum_{\underset{k\geq 1}{n,m\geq 1} }    b_{n+k} \bar{b}_{m+k} c_{m}\bar{c}_{n}  \frac{ mn}{ n+m+k}.
\end{align*}
By the symmetry in $m,n$ we may interpret each term as being half that of its real part, so that the inequality 
$2\Re(ab) \leq \abs{a}^2 + \abs{b}^2$ applied to each of these expression,   yields
\begin{align*}
	(I) &\leq    \sum_{\underset{k\geq 0}{n,m\geq 1} }  \Big(\abs{b_{n} c_{k+m}}^2 +    \abs{ {b}_{m}    {c}_{k+n}}^2  \Big)     \frac{nm}{2(n+m+k)}  \\
	& =   \sum_{\underset{k\geq 0}{n,m\geq 1} }   \abs{b_{n} c_{k+m}}^2     \frac{nm}{n+m+k} =: (I_*),  \\
	(II)  &\leq  \sum_{\underset{k\geq 1}{n,m\geq 1} }  \Big(\abs{b_{n+k} c_m}^2 + \abs{  {b}_{m+k}  {c}_{n} }^2 \Big) \frac{ mn}{ 2(n+m+k)}\\
	& = \sum_{\underset{k\geq 1}{n,m\geq 1} }  \abs{b_{n+k} c_m}^2  \frac{ mn}{ n+m+k} =: (II_*).
\end{align*}
Here, we again used the symmetry in $m,n$. The next step is to isolate all unique pairs of indices. We change the order of summation as follows,
\begin{align*}
	(I_*) &=    \sum_{ n\geq 1 } \sum_{m\geq 1}  \sum_{k\geq m} \abs{b_{n} c_{k} }^2  \frac{nm}{n+k}  =    \sum_{ n,k\geq 1 } \abs{b_{n} c_{k} }^2  \sum_{m=1}^{k} \frac{nm}{n+k}\\
	& =   \sum_{ n,k\geq 1 } \abs{b_{n} c_{k} }^2    \frac{nk(k+1)}{2(n+k)},\\
	(II_*) &=   \sum_{ m\geq 1} \sum_{n\geq 1} \sum_{k\geq n+1}   \abs{b_{k}c_m}^2 \frac{nm}{m+k}  = \sum_{ \underset{k\geq 2}{m\geq 1}}     \abs{b_{k}c_m}^2  \sum_{n=1}^{k-1} \frac{nm}{m+k}\\ 
	&= \sum_{ \underset{k\geq 2}{m\geq 1}}     \abs{b_{k}c_m}^2   \frac{mk(k-1)}{2(m+k)}.
\end{align*}
We notice that we can add the index $k=1$ to the second sum without changing its value. Finally, taking this into account, we add these terms together to get
\begin{align*}
	(I_*) + (II_*)  &=  \sum_{n,m\geq 1} \abs{b_n}^2 \abs{c_m}^2  \left(  \frac{nm(m+1)}{2(n+m)} +  \frac{mn(n-1)}{2(m+n) }\right) \\
	&=\sum_{n,m\geq 1} \abs{b_n}^2 \abs{c_m}^2   \frac{nm}{2}.
\end{align*}
Replacing $a_n = b_{n+1}(n+1)$, we now see that the right-hand side exactly equals
\begin{equation*}
	\frac{1}{2} \sum_{\underset{m\geq 1}{n\geq 0}} \abs{b_n}^2 \abs{c_m}^2   \frac{m}{n+1} =   \frac{1}{2} \Big( \sum_{n\geq 0} \frac{\abs{a_n}^2}{n+1}\Big) \Big( \sum_{m\geq 1} \abs{c_m}^2 m \Big) = \frac{1}{2} \norm{f}_{A^2(\D)}^2 \norm{\psi'}_{A^2(\D)}^2,
\end{equation*}
which was to be shown.

\section{Proof of Theorem \ref{t.maint} for $\alpha \geq -1$} \label{sec.weightedt}
 
We now explain how to prove Theorem \ref{t.maint} for the spaces $A^2_\alpha(\D)$. Actually, we restrict us to the case $\alpha >-1$, leaving the simplest case, $\alpha=-1$, to the interested reader.
%
%

We begin by  recalling some additional facts about these spaces. These can be checked by consulting the introductory chapter of the reference  \cite{hedenmalm_korenblum_zhu2000}. First, the   monomials $z^n$ form an orthogonal base for the space 
and have norm
\begin{equation*}
	\norm{z^n}_\alpha^2 = \frac{n! \Gamma(\alpha+2)}{\Gamma(n + \alpha + 2)},
\end{equation*}
where $\Gamma(z)$ is the usual gamma-function satisfying the functional relation $\Gamma(z+1)=z\Gamma(z)$. To keep the notation simple, we  introduce the coefficients
\begin{equation*}
	D_n^\alpha =   \frac{n! \Gamma(\alpha+2)}{\Gamma(n + \alpha + 2)}.
\end{equation*}
In particular, we have
\begin{equation*}
	D_n^0 =  \frac{n! \Gamma(2)}{\Gamma(n+2)} = \frac{n!}{(n+1)!} = \frac{1}{n+1} \qquad \text{and} \qquad D_n^{-1} = \frac{n! \Gamma(1)}{\Gamma(n+1)} = 1.
\end{equation*}
Furthermore, it is well-known that 
\begin{equation*}
	 \sum_{n \geq 0} \frac{(z\overline{w})^n}{D_n^\alpha}  = \frac{1}{(1 - \overline{w}z)^{2+\alpha}}.
\end{equation*}
From this, and the orthogonality of monomials,   the   orthogonal projection from $L^2(\D,\dif A_\alpha)$ to $A_\alpha(\D)$ is given by 
\begin{equation*}
	f \longmapsto P_\alpha f(z) = \int_\D \frac{f(w)}{(1-\bar{w}z)^{2+\alpha}} \dif A_\alpha(w),
\end{equation*}
and so  we have an explicit expression for the associated Hankel operator
\begin{equation*}
	H_{\overline{\psi}} f(z) = \overline{\psi}(z) f - P_\alpha(\overline{\psi}f)(z).
\end{equation*}
Repeating the same type of arguments as above, we get an expression in terms of the Taylor coefficients of $f = \sum_{n\geq 0} a_n z^n$ and $\psi = \sum_{k \geq 1} c_k z^k$.
\begin{align*}
	 H(\bar{\psi}f)(z) 
	 &= \sum_{\ell \geq 1} \sum_{n \geq 0} \bar{c}_\ell a_n \bar{z}^\ell z^n-  \sum_{n\geq 1} \sum_{k=0}^{n-1}  \frac{D_n^\alpha}{D_k^\alpha} a_n \bar{c}_{n-k} z^k.
\end{align*}
As before, the next steps consist of  reorganizing these sums and taking its norm.  Doing this, we get
\begin{multline} \label{weightednorm}
	\norm{H_{\overline{\psi}}f}_\alpha^2 =   
		  \underbrace{ \sum_{\underset{k\geq 1}{n,m\geq 0} } a_n \bar{a}_m c_{k+m} \bar{c}_{k+n} D_{n+m+k}^\alpha}_{(I)}
	\\+ \underbrace{\sum_{k \geq 0}   \sum_{n,m\geq k+1} a_n \bar{a}_m c_{m-k}\bar{c}_{n-k} \Big(D_{n+m-k}^\alpha -  \frac{D_n^\alpha D_m^\alpha}{D_k^\alpha}   \Big)}_{(II)}.
\end{multline}
Here, the labeling into $(I)$ and $(II)$ exactly match the previous case. After this, we make the change of coefficients $D_n^\alpha a_n = b_{n+1}$. Some algebra then yields
\begin{multline*}
	(I) + (II) =      \sum_{\underset{k\geq 0}{n,m\geq 1} } b_{n} \bar{b}_{m} c_{k+m} \bar{c}_{k+n} \frac{D_{n+m+k-1}^\alpha}{D_{n-1}^\alpha D_{m-1}^\alpha}\\
	 +   \sum_{\underset{k \geq 1}{n,m\geq 1}}  b_{n+k} \bar{b}_{m+k} c_{m}\bar{c}_{n} \Big( \frac{D_{n+m+k-1}^\alpha}{D_{n+k-1}^\alpha D_{m+k-1}^\alpha} -  \frac{1}{D_{k-1}^\alpha}   \Big).
\end{multline*}
Using the symmetry in the indices $m,n$, we apply the inequality $2\Re ab \leq \abs{a}^2+\abs{b}^2$ in the same way as in the unweighted case, to obtain
\begin{multline*}
	(I) + (II) \leq  
		 \sum_{\underset{k\geq 0}{n,m\geq 1} }  \abs{b_{n} c_{k+m} }^2  \frac{D_{n+m+k-1}^\alpha}{D_{n-1}^\alpha D_{m-1}^\alpha}  \\
	   +   \sum_{\underset{k \geq 1}{n,m\geq 1}}   \abs{b_{n+k}c_m}^2   \Big( \frac{D_{n+m+k-1}^\alpha}{D_{n+k-1}^\alpha D_{m+k-1}^\alpha} -  \frac{1}{D_{k-1}^\alpha}   \Big).
\end{multline*}
The next step is to isolate all unique pairs of indices. Changing the indices and the order of summation, in exactly the same way as before, we get that the above expression is equal to
\begin{equation} \label{final countdown}
	  \sum_{ n,m\geq 1 } \abs{b_{n} c_{m} }^2  \sum_{\ell=0}^{m-1}\frac{D_{n+m-1}^\alpha}{D_{n-1}^\alpha D_{\ell}^\alpha}  
	 +  \sum_{ \underset{n\geq 2}{m\geq 1}}     \abs{b_{n}c_m}^2  \sum_{\ell=m}^{n+m-2}  \Big( \frac{D_{n+m-1}^\alpha}{D_{n-1}^\alpha D_{\ell}^\alpha} -  \frac{1}{D_{\ell-m}^\alpha}   \Big).
\end{equation}
The proof of the desired inequality is complete once we show that this expression is smaller than 
\begin{equation*}
	\frac{\norm{f}_\alpha^2 \norm{\psi'}_\alpha^2}{2+\alpha} =  \frac{1}{2+\alpha} \sum_{\underset{n\geq 0}{m\geq 1}} \frac{\abs{c_m}^2 \abs{b_n}^2 m}{D_{n-1}^\alpha}.
\end{equation*}
This follows exactly by applying the following  lemma to the sums in the expression \eqref{final countdown}.
\begin{lemma} Suppose  $v \in \N$, then
\begin{equation*}
	 \sum_{\ell=0}^{v} \frac{1}{D^\alpha_\ell}  =  \frac{v+1}{2+\alpha} \frac{1}{D^\alpha_{v+1}}.
\end{equation*}
\end{lemma}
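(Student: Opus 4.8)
The plan is to prove the identity by induction on $v$, using only the functional equation $\Gamma(z+1)=z\Gamma(z)$ to control the ratio of consecutive coefficients $D_n^\alpha$. First I would dispatch the base case $v=0$: since $D_0^\alpha = 1$ and $D_1^\alpha = 1/(\alpha+2)$, the left-hand side is $1/D_0^\alpha = 1$ and the right-hand side is $\frac{1}{2+\alpha}\cdot\frac{1}{D_1^\alpha} = \frac{1}{2+\alpha}(\alpha+2) = 1$, so the two agree.

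For the inductive step, assume the claim for some $v$. Adding the single term $1/D_{v+1}^\alpha$ to both sides, the statement for $v+1$ reduces to
\begin{equation*}
	\frac{v+1}{2+\alpha}\,\frac{1}{D_{v+1}^\alpha} + \frac{1}{D_{v+1}^\alpha} = \frac{v+2}{2+\alpha}\,\frac{1}{D_{v+2}^\alpha}.
\end{equation*}
Factoring out $1/D_{v+1}^\alpha$ on the left and multiplying through, this is equivalent to the single ratio identity
\begin{equation*}
	\frac{D_{v+2}^\alpha}{D_{v+1}^\alpha} = \frac{v+2}{v+\alpha+3},
\end{equation*}
which follows directly from the definition $D_n^\alpha = n!\,\Gamma(\alpha+2)/\Gamma(n+\alpha+2)$ together with $\Gamma(z+1)=z\Gamma(z)$, since the factorials contribute a factor $v+2$ and the Gamma quotient $\Gamma(v+\alpha+3)/\Gamma(v+\alpha+4)$ contributes $1/(v+\alpha+3)$.

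The main obstacle here is really only the correct bookkeeping of this ratio; there is no analytic difficulty, as the entire content is the telescoping/recursive structure of the coefficients $D_n^\alpha$. Alternatively, and perhaps more transparently, I would note that $1/D_\ell^\alpha = \binom{\alpha+\ell+1}{\ell}$, so that the sum is a hockey-stick sum $\sum_{\ell=0}^{v}\binom{\alpha+1+\ell}{\ell} = \binom{\alpha+v+2}{v}$; the claimed closed form then follows at once from the elementary relation $\binom{\alpha+v+2}{v+1} = \frac{\alpha+2}{v+1}\binom{\alpha+v+2}{v}$, together with the identification $1/D_{v+1}^\alpha = \binom{\alpha+v+2}{v+1}$. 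Either route completes the proof in a few lines.
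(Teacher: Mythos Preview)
Your proof is correct and follows essentially the same approach as the paper: induction on $v$, with the base case $v=0$ checked directly and the inductive step reduced to the ratio identity $D_{v+2}^\alpha/D_{v+1}^\alpha = (v+2)/(v+\alpha+3)$, which is exactly the paper's observation $D_v^\alpha = \frac{v}{v+\alpha+1}D_{v-1}^\alpha$ shifted in index. Your alternative via the hockey-stick identity for generalized binomial coefficients is a pleasant bonus not in the paper, but the core argument is the same.
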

\begin{proof}
	The relation holds for $v=0$. Indeed,
	\begin{equation*}
		D^\alpha_{1} = \frac{1! \Gamma(\alpha+2)}{\Gamma(3+\alpha)} = \frac{\Gamma(2+\alpha)}{(2+\alpha)\Gamma(2+\alpha)} = \frac{1}{2+\alpha} D^\alpha_0.
	\end{equation*}
	The induction step is easily verified from the    observation
	\begin{equation*}
		D^\alpha_v = \frac{v! \Gamma(2+\alpha)}{\Gamma(v+\alpha+2)}  = \frac{v}{v+\alpha+1} D^\alpha_{v-1}.
	\end{equation*}
\end{proof}
Finally, we note that the sharpness is seen from formula \eqref{weightednorm} by choosing $\Omega=\D$, $\psi = z$ and $f \equiv 1$. This ends the proof.

\section{Torsional rigidity and lower bounds} \label{sec.rigidity}

Let $\Omega$ be a bounded and simply connected domain in $\C$. In this section we consider the problem of 
obtaining a lower bound for the commutator of $T_\psi$ on $A^2_\alpha(\Omega)$ in the case that $\psi = z$. By the proof of Corollary \ref{c.putnam}, it is the same as  obtaining a lower bound for
 $\norm{H_{\bar{z}}}_{A^2_\alpha(\Omega) \rightarrow L^2_\alpha(\Omega)}^2$.

As above, for $\alpha>-1$,  we denote by $A_\alpha^2(\Omega)$ the weighted Bergman spaces, while we set $A^2_{-1} := E^2(\Omega)$, the
Smirnov-Hardy space on $\Omega$.

With this notation, for $\alpha\geq -1$, and in light of the computation in Corollary \ref{c.putnam}, we get
\begin{align*}
	\norm{[T_z, T_z^\ast]}_\alpha  =  \sup_{ {h \in A_\alpha^2  }}  \frac{ \norm{H_{\bar{z}} h }^2_\alpha}{\norm{h}_\alpha^2}
	&=  \sup_{ {h \in A_\alpha^2  }}  \frac{  \norm{\bar{z}h}^2_\alpha - \norm{P(\bar{z}h)}^2_\alpha }{\norm{h}^2_\alpha} \\
	&=  \sup_{ {h \in A_\alpha^2  }} \frac{\operatorname{dist}(\bar{z}h, A^2_\alpha)}{\norm{h}^2_\alpha} \\
	&=  \sup_{ {h \in A_\alpha^2  }} \inf_{f \in A^2_\alpha} \frac{ \norm{\bar{z} h - f}^2_\alpha}{\norm{h}^2_\alpha} \\
	&=
	\sup_{ {h \in A_\alpha^2  }} \inf_{f \in A^2_\alpha} \sup_{ {g \in L^2_\alpha   }} \frac{ \abs{ \inner{\bar{z}h - f}{g}_\alpha}^2}{\norm{g}^2_\alpha \norm{h}_\alpha^2}. 
\end{align*}


We turn our attention to the formulation in terms of the supremums. By restricting the innermost supremum to $(A^2_\alpha)^\perp$, this yields the apparent inequality
\begin{equation*}
	\norm{H_{\bar{z}} h }^2_\alpha  \geq   \sup_{ {g \in (A^2_\alpha)^\perp  }}  \frac{\abs{ \inner{\bar{z}h }{g}_\alpha}^2}{\norm{g}_\alpha^2}.
\end{equation*}
However, one can check that this extremum is attained by  $g = H_{\bar{z} } h $ in $(A^2_\alpha)^\perp$. Hence, we get the identity
\begin{equation} \label{variational problem}
	\norm{H_{\bar{z}} h }^2_\alpha  =   \sup_{ {g \in (A^2_\alpha)^\perp}}  \frac{\abs{ \inner{\bar{z} h }{g}_\alpha}^2}{\norm{g}_\alpha^2}.
\end{equation}

It is now easy to explain how Khavinson \cite{khavinson1985} used essentially this relation for $\alpha=-1$ to deduce the isoperimetric inequality. The key observation is that by Cauchy's integral formula, the function $g =    \overline{\dif z}/\abs{\dif z}$ is seen to be in $E^2(\Omega)^\perp$. Indeed, combined  with the choice $h=1$ and Green's formula, this yields
\begin{align*}
	\norm{[T^\ast,T]}_\alpha \geq \frac{\norm{H_{\bar{z}} 1 }^2_\alpha }{\norm{1}_\alpha}  &\geq   \frac{\abs{ \inner{\bar{z} }{g}_\alpha}^2}{\norm{1}_\alpha \norm{g}_\alpha^2} \\
	&=  \frac{1}{\Per (\Omega)^2} \Abs{ \int_{\partial \Omega} \bar{z} \, \dif z }^2 \\
	&=  \frac{1}{\Per (\Omega)^2} \Abs{2\im  \int_{\Omega}  (\bar{\partial} \bar{z}) \, \dif z }^2 	=  \frac{4 \Area(\Omega)^2 }{\Per (\Omega)^2}.
\end{align*}
Applying either Putnam or our Theorem \ref{t.maint}, the isoperimetric inequality 
\begin{equation*}
	  \Area(\Omega)  \leq \frac{\Per (\Omega)^2}{4\pi}
\end{equation*}
immediately  follows.

As stated above, Bell, Ferguson and Lundberg mimicked more or less the same approach to get a lower bound for the case $\alpha=0$. More specifically, they used the relation \eqref{variational problem} and noted that   $A^2(\Omega)^\perp$ may be expressed as 
\begin{equation*}
	\operatorname{clos} \{ \partial \psi : \psi \in C^\infty_0(\bar{\Omega}) \}.
\end{equation*}
Here $\psi \in C^\infty_0(\Omega)$ if it is smooth on $\Omega$, continuous up to and including on the boundary where it also vanishes. That this holds can be seen by using Green's formula in combination with the Hahn-Banach theorem. So, for the Bergman space, they observed that
\begin{equation} \label{dualitycalc}
	\begin{aligned}
	\norm{[T^\ast,T]}  \geq \frac{\norm{H_{\bar{z}} 1 }^2 }{\norm{1} } 
	&= 
	\sup_{ {g \in (A^2)^\perp}}  \frac{\abs{ \inner{\bar{z}  }{g}}^2}{\norm{g} ^2 \norm{1} ^2} \\
	&= 
	\sup_{ \psi \in C^\infty_0(\bar{\Omega}) }  \frac{\abs{ \inner{\bar{z}  }{\partial \psi} }^2}{\norm{\partial \psi} ^2 \norm{1} ^2} \\
	&= 
	\sup_{ \psi \in C^\infty_0(\bar{\Omega}) }  \frac{4\abs{ \inner{1  }{  \psi} }^2}{\norm{\nabla \psi} ^2 \norm{1} ^2} 
	= 
	\frac{\rho_\Omega}{\Area(\Omega)}.
	\end{aligned}
\end{equation}
Here, the quantity $\rho_\Omega$ on the left-hand side is what is known in elasticity theory as the torsional rigidity of the domain $\Omega$. In fact, classically
\begin{equation} \label{torsional rigidity 1}
	\rho_\Omega = 	\sup_{ \psi \in C^\infty_0(\bar{\Omega}) }  \frac{4 \Big(  \int_\Omega \psi \dif x \dif y \Big)^2}{ \int_\Omega (\partial_x \psi^2 + \partial_y \psi^2 ) \dif x \dif y},
\end{equation}
 where the supremum is only over real valued functions $\psi$. 

As we noted in the introduction, the torsional rigidity is a constant from mechanics which  quantifies the resistance to rotation of a cylindrical object, imagined as being  perpendicular to the complex plane, with cross-section equal to $\Omega$ at all heights.  We refer the reader to  \cite[p. 2]{polya_szego1951} for a more accurate physical description.  Mathematically, this quantity has several equivalent definitions. See, e.g. \cite[p.87--89]{polya_szego1951} for a discussion of this. Observe, in particular, that $$\norm{H_{\bar{z}}1}^2_{A^2(\Omega)} = \rho_\Omega.$$

In order to take a closer look at the case $\alpha \in (-1,0)$, we look at yet another way of expressing the torsional rigidity of a simply connected domain $\Omega$  (see, e.g., \cite{polya_szego1951}). Namely    if $v$ is the
solution of the Dirichlet problem
\begin{equation*}
	\left\{ \begin{aligned}  \Delta v = -2 \\ v |_{\partial \Omega} = 0 \end{aligned}\right.,
\end{equation*}
then
\begin{equation} \label{torsional rigidity 2}
	\rho_\Omega = 2 \int_\Omega v \, \dif x \dif y. 
\end{equation}
To see that this is equivalent to \eqref{torsional rigidity 1}, first observe that, assuming all function are real valued, by the Cauchy-Schwarz inequality we have
\begin{align*}
	 \int_\Omega 2 \psi \dif x \dif y =  - \int_\Omega \Delta v \, \psi \, \dif x \dif y  &= \int_\Omega \nabla v \cdot \nabla \psi \, \dif x \dif y \\
	& \leq \bigg(  \int_\Omega \abs{\nabla v}^2 \dif x \dif y  \int_\Omega \abs{\nabla \psi}^2 \dif x \dif y \bigg)^{1/2}.
\end{align*}
Since 
\begin{equation*}
	2 \int_\Omega v  \dif x \dif y  = -\int_\Omega v \Delta v \dif x \dif y =  \int_\Omega  \abs{\nabla v}^2 \dif x \dif y,
\end{equation*}
this implies that $2 \int v \, \dif x \dif y \geq \rho_\Omega$.
On the other hand, simply setting $\psi = v$ in \eqref{torsional rigidity 1}, one obtains the identity \eqref{torsional rigidity 2}.

Elaborating on the above discussion, we observe that if we set $u  = -2 \partial v$, then $\bar{\partial} u = 1$. That is $u$ solves the same
d-bar problem as $\bar{z} - P_+(\bar{z})$. Indeed, these functions are the same. More generally, if $h$ is analytic on $\Omega$ and $v_h$ solves
\begin{equation*}
	\left\{ \begin{aligned}  \Delta v_h = h \\ v |_{\partial \Omega} = 0 \end{aligned}\right.,
\end{equation*}
then $u_h = 4 \partial v_h$ solves $\bar{\partial} u_h = v$. Since $u_h$ is seen, by Green's formula, to be orthogonal to $A^2(\Omega)$, it follows
that 
\begin{equation*}
	u_h = H_{\bar{z}} h.
\end{equation*}
 
Returning to the case $\alpha \in (-1,0)$, we pose the following Dirichlet problem
\begin{equation*}
	\left\{ \begin{aligned} \Delta_\alpha v = -2 \\ v |_{\partial \Omega} = 0 \end{aligned}\right.,
\end{equation*}
where 
\begin{equation*}
 \Delta_\alpha=  4 \bar{\partial} \frac{1}{w_\alpha} \partial   
\end{equation*}
and $w_\alpha$ is the weight so that $\dif A_\alpha(z) = w_\alpha(z) \dif A(z)$, the measure defining the weighted Bergman spaces $A^2_\alpha(\Omega)$. With this,
we define a weighted  torsional rigidity as follows:
\begin{equation*}
	\rho_{\Omega,\alpha} = 2 \int_\Omega {v} \, \dif x \dif y.
\end{equation*}

By repeating the steps of \eqref{dualitycalc} above, using now that, by Green's formula, $u := -2w_\alpha^{-1} \partial v$ is in $(A^2_\alpha)^\perp$ and it is also a solution to the d-bar problem $\bar{\partial} u = 1$, therefore $u=H_{\bar{z}}1$ , it is now straight-forward to obtain the following result.
\begin{proposition}
	Let $\Omega$ be a simply connected domain in $\C$. For $\alpha \in (-1,0)$, we have
	\begin{equation*}
		\norm{[T_z^\ast,T_z ]}_{A^2_\alpha(\Omega)} \geq \frac{\rho_{\Omega,\alpha}}{\int_\Omega w_\alpha \, \dif x \dif y}.
	\end{equation*}
\end{proposition}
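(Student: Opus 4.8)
The plan is to bound the commutator norm from below by restricting the outer supremum in the variational description of $\norm{[T_z^\ast,T_z]}_\alpha$ to the single test function $h=1$, so that
\[
\norm{[T_z^\ast,T_z]}_\alpha = \sup_{h \in A^2_\alpha} \frac{\norm{H_{\bar z} h}_\alpha^2}{\norm{h}_\alpha^2} \geq \frac{\norm{H_{\bar z} 1}_\alpha^2}{\norm{1}_\alpha^2},
\]
and then to evaluate the two sides in terms of the weighted torsional rigidity. The denominator is immediate, $\norm{1}_\alpha^2 = \int_\Omega w_\alpha\,\dif A = \tfrac{1}{\pi}\int_\Omega w_\alpha\,\dif x\,\dif y$. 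The heart of the matter is the identification $H_{\bar z}1 = u$, where $u := -2 w_\alpha^{-1}\partial v$ and $v$ solves the weighted Dirichlet problem $\Delta_\alpha v = -2$, $v|_{\partial\Omega}=0$, exactly as the excerpt suggests.

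To establish $u = H_{\bar z}1$ I would verify two properties and then invoke a uniqueness argument. First, $u$ solves the correct $\bar\partial$-equation: since $\Delta_\alpha v = 4\bar\partial(w_\alpha^{-1}\partial v) = -2$, we get $\bar\partial u = -2\bar\partial(w_\alpha^{-1}\partial v) = 1$, which matches $\bar\partial(H_{\bar z}1) = \bar\partial(\bar z - P(\bar z)) = \bar\partial\bar z = 1$. Second, $u \in (A^2_\alpha)^\perp$: for holomorphic $f$ one computes $\inner{u}{f}_\alpha = \int_\Omega u\bar f\, w_\alpha\,\dif A = -2\int_\Omega (\partial v)\bar f\,\dif A$, and since $\bar f$ is anti-holomorphic we have $\partial(v\bar f) = (\partial v)\bar f$, so that Green's formula converts the integral into a boundary integral of $v\bar f$ that vanishes because $v|_{\partial\Omega}=0$. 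Granting enough regularity, the difference $u - H_{\bar z}1$ is then holomorphic (its $\bar\partial$ vanishes) and lies in $(A^2_\alpha)^\perp$, hence in $A^2_\alpha \cap (A^2_\alpha)^\perp = \{0\}$, giving $u = H_{\bar z}1$.

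It then remains to compute $\norm{u}_\alpha^2$. Writing $\norm{u}_\alpha^2 = \int_\Omega |u|^2 w_\alpha\,\dif A = \tfrac{4}{\pi}\int_\Omega w_\alpha^{-1}|\partial v|^2\,\dif x\,\dif y$, I would integrate by parts once more using $\Delta_\alpha v = -2$ and $v|_{\partial\Omega}=0$:
\[
\rho_{\Omega,\alpha} = 2\int_\Omega v\,\dif x\,\dif y = -\int_\Omega v\,\Delta_\alpha v\,\dif x\,\dif y = 4\int_\Omega w_\alpha^{-1}(\bar\partial v)(\partial v)\,\dif x\,\dif y,
\]
and since $v$ is real $(\bar\partial v)(\partial v) = |\partial v|^2$. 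This yields $\norm{H_{\bar z}1}_\alpha^2 = \rho_{\Omega,\alpha}/\pi$, and dividing by $\norm{1}_\alpha^2 = \tfrac{1}{\pi}\int_\Omega w_\alpha\,\dif x\,\dif y$ the factors of $\pi$ cancel to give exactly the asserted bound.

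The main obstacle is analytic rather than algebraic: one must know that the weighted Dirichlet problem admits a solution $v$ regular enough to justify the two integrations by parts, and that $u$ and $u - H_{\bar z}1$ genuinely belong to $L^2_\alpha$, so that the orthogonality step identifying $u$ with $H_{\bar z}1$ is legitimate. For reasonable domains $\Omega$ this is standard elliptic theory, but the degeneracy of the weight $w_\alpha$ near $\partial\Omega$ when $\alpha \in (-1,0)$ is the delicate point that needs care.
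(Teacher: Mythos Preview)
Your proposal is correct and follows essentially the same approach as the paper: the paper's proof is the one-sentence remark that by Green's formula $u := -2w_\alpha^{-1}\partial v$ lies in $(A^2_\alpha)^\perp$ and solves $\bar\partial u = 1$, whence $u = H_{\bar z}1$, after which the identity $\rho_{\Omega,\alpha} = \pi\norm{H_{\bar z}1}_\alpha^2$ and the bound follow. You have simply written out in full the details the paper leaves to the reader, including the uniqueness argument and the integration-by-parts computation of $\norm{u}_\alpha^2$, and you correctly flag the regularity issues near $\partial\Omega$ that the paper does not address.
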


The above result does not immediately make sense for $\alpha=-1$, since we are no longer dealing with
a norm given by an area measure, but rather one given by the arc length measure of the boundary. Even though, in this case, we 
could give a similar formulation in terms of laplacians, it is more convenient to note that we actually have
\begin{equation*}
	\rho_{\Omega, \alpha} = \norm{H_{\bar{z}} 1 }^2_\alpha  =   \sup_{ {g \in (A^2_\alpha)^\perp}}  \frac{\abs{ \inner{\bar{z} }{g}_\alpha}^2}{\norm{g}_\alpha^2},
\end{equation*}
which is a more straight-forward relation that holds for all $\alpha \in [-1,0]$.  With this, the above proposition also extends to the Smirnov-Hardy space.

The question   now  is whether $\norm{H_{\bar{z}}1}_\alpha^2/\norm{1}^2_\alpha$ is identical to Khavinson's lower bound? While, the answer is yes for  discs, in general, it is  no, as is seen by the following example.

\subsection*{Example 1} We  consider the map 
$\phi(z) = (1+z)^2 - 1$. This is a univalent conformal and analytic map from   $\D$ onto   the domain $\Omega$ shown in Figure \ref{figure omega}. 
\begin{figure}
\includegraphics[scale=0.5]{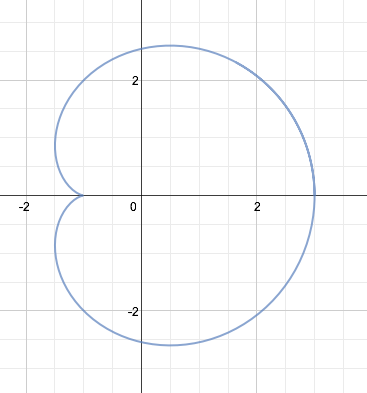}
\caption{The domain $\Omega$ of Example 1.}
\label{figure omega}
\end{figure}
It is a basic calculus exercise to calculate
\begin{align*}
	\operatorname{Per}(\Omega) &= \int_{\partial \D} \abs{\phi'(z)} \abs{ \dif z} \\
	&= 2 \int_0^{2\pi} \abs{1+ \e^{\im \theta}} \dif \theta \\
	&= 2^{3/2} \int_0^{2\pi} \sqrt{ 1 - \sin \theta} \dif \theta
	= 16, 
\end{align*}
and
\begin{equation*}
	\operatorname{Area}(\Omega) = \int_{\D} \abs{\phi'(z)}^2 \dif x \dif y = 6\pi.
\end{equation*}

The Khavinson lower bound   is now
\begin{equation*}
	\norm{[T,T^\ast]} \geq 4 \frac{\operatorname{Area}(\Omega)^2}{\operatorname{Per}(\Omega)^2} = 4 \frac{(6\pi)^2}{16^2} =  \frac{9\pi^2}{16}.
\end{equation*}

As may be calculated explicitly, the torsional rigidity lower bound becomes 
\begin{equation*}
	\frac{\norm{H_{\bar{z}}1}_{E^2(\Omega)}^2}{\norm{1}^2_{E^2(\Omega)}}  =  \frac{\norm{H_{\bar{\phi}} \sqrt{\phi'(z)} }^2_{L^2(\dif \D)} }{\norm{1}^2_{E(\Omega)}} = \frac{2\pi}{\operatorname{Per}(\Omega)}   \sum_{k\geq 1} \Bigabs{\sum_{ \ell \geq k}  \gamma_{\ell-k} \bar{c}_\ell}^2,
\end{equation*}
where $\phi(z) = \sum_{k\geq 1} c_kz^k$ and   $\sqrt{\phi'(z)} = \sum_{k \geq 0} \gamma_k z^k$. Calculating this, we get
\begin{equation*}
	\frac{\norm{H_{\bar{z}}1}_{E^2(\Omega)}^2}{\norm{1}^2_{E^2(\Omega)}} = \frac{1}{16} 2 \pi \cdot \frac{29}{2}  = \frac{29 \pi}{16}.
\end{equation*}
Since
\begin{equation*}
 \frac{29 \pi}{16} > \frac{9\pi^2}{16},
\end{equation*}
this shows that the two lower bounds are in general different.

\section*{Acknowledgements} Both authors would like to thank professor Joaquim Ortega-Cerd\`a for helpful conversations, as well as professors Kristian Seip and Yurii Lyubarskii for inviting them to the Centre for Advanced study in Oslo during the spring 2013, where most of the work on this paper was done. 

\bibliographystyle{amsplain}
\def\cprime{$'$} \def\cprime{$'$} \def\cprime{$'$} \def\cprime{$'$}
\providecommand{\bysame}{\leavevmode\hbox to3em{\hrulefill}\thinspace}
\providecommand{\MR}{\relax\ifhmode\unskip\space\fi MR }
\providecommand{\MRhref}[2]{%
  \href{http://www.ams.org/mathscinet-getitem?mr=#1}{#2}
}
\providecommand{\href}[2]{#2}

\end{document}